\theoremstyle{plain}
\newtheorem{thm}{Theorem}
\newtheorem{prop}[thm]{Proposition}
\newtheorem{cor}[thm]{Corollary}
\newtheorem{lemma}[thm]{Lemma}
\newtheorem{que}[thm]{Question}
\theoremstyle{definition}
\newtheorem{remark}[thm]{Remark}
\newtheorem{example}[thm]{Example}
\newcommand{\mm}{\mathfrak{m}}
\newcommand{\G}{\mathrm{gr}_\mm(R)}
\newcommand{\LM}{\mathrm{LM}}
\newcommand{\Card}{\mathrm{Card}}
\newcommand{\codim}{\mathrm{codim}}
\newcommand{\Tor}{\mathrm{Tor}}
\title{On the multiplicity of tangent cones of monomial curves}
\author{Alessio Sammartano}
\address{Department of Mathematics, University of Notre Dame, 255 Hurley, Notre Dame, IN 46556, USA}
\email{asammart@nd.edu}
\subjclass[2010]{Primary: 13A30; Secondary: 13C40; 13D02; 13H10; 13H15; 13P10, 20M14}
\keywords{Multiplicity; degree; associated graded ring; tangent cone; monomial curve; numerical semigroup; Betti numbers; initial ideal.} 
\begin{document}

\begin{abstract}
Let $\Lambda$ be a numerical semigroup, $\mathcal{C}\subseteq \mathbb{A}^n$  the monomial curve singularity associated to $\Lambda$, and $\mathcal{T}$ its tangent cone.
In this paper we provide a sharp upper bound for the least positive integer in $\Lambda$ in terms of the codimension and the maximum degree of the equations of $\mathcal{T}$,
when $\mathcal{T}$ is not a complete intersection. 
A special case of this result settles a question of J. Herzog and D. Stamate.
\end{abstract}

\maketitle

\section{Introduction}

Let $G$ be a standard graded algebra over a field $\Bbbk$.
It is an important problem in commutative algebra and algebraic geometry
to find formulas and inequalities that relate the {multiplicity} or {degree} $e(G)$ to other invariants of $G$,
such as the codimension, degrees of the defining equations, or degrees of the  higher syzygies.
Significant advancements have been achieved in this area in recent years, 
see for instance \cite{BS12,CCMPV17,ES09,MP17}.

There is an obvious upper bound for $e(G)$ when $G$ has codimension $c$ and is defined by forms of degree $d$, namely $e(G) \leq d^c$, 
with equality holding if and only if $G$ is a complete intersection.
We will assume that $G$ is not a complete intersection,  then the  question becomes how close can  $e(G)$ actually be to $d^c$.
A general result in this direction was proved in \cite{E09} for almost complete intersections, see also \cite{HMMS15}.

It is interesting to investigate this problem in special situations.
In this paper, we are concerned with the case when $G$ is  the tangent cone  of  an affine monomial curve singularity.
Let $\Lambda = \langle n_0, \ldots, n_c\rangle$ be a numerical semigroup,
i.e. a cofinite submonoid of $(\mathbb{N},+)$.
The  ring $R= \Bbbk \llbracket \Lambda \rrbracket = \Bbbk \llbracket t^n \, : \, n \in \Lambda \rrbracket \subseteq \Bbbk \llbracket t\rrbracket$ 
is the completed local ring at the origin of the  curve $\mathcal{C}\subseteq\mathbb{A}^{c+1}$ parametrized by $X_0 = t^{n_0}, \ldots, X_c = t^{n_c}$.
The  associated graded ring of $R$ is $\G = \oplus_{i\geq 0}\mm^i/\mm^{i+1}$, where $\mm$ denotes the maximal ideal of $R$;
it is the coordinate ring of the tangent cone $\mathcal{T}$ to $\mathcal{C}$ at the singularity $\mathbf{0} \in \mathcal{C}$.
The combinatorial structure of such rings allows to formulate more precise results,
and thus the study of the algebraic properties of $\G$ is an active topic of research, 
with  recent progress concerning especially its  defining equations \cite{DMS13,HS14,HS17,K16},
Hilbert function \cite{AMS09,OST17},
 Cohen--Macaulayness \cite{BF06,H16,JZ16,S16},
and   related notions \cite{Br10,CZ09,DMS11}.
Notice that if $\Lambda$ is minimally generated by $n_0 <  \cdots < n_c$
then we have $\codim(R) = \codim(\G) = c$ and $e(R) = e(\G ) = n_0$.
In the case  $c=1$ of plane curves, $\G$ is always a complete intersection, so we assume $c \geq 2$.

In a recent work  \cite{HS17} Herzog and Stamate consider  tangent cones  defined by quadrics, calling ``quadratic'' those numerical semigroups for which this condition occurs.
They propose the following problem:

\begin{que}[{\cite[Question 1.11]{HS17}}]\label{Question}
Let $\Lambda = \langle n_0, \ldots, n_c \rangle$ be a numerical semigroup, $R = \Bbbk \llbracket \Lambda\rrbracket$ and $G = \G$.
Assume that $G$ is defined by quadratic equations. 
Are the following statements true?
\begin{itemize}
\item[(a)]
Either $ e(G) \leq 2^c- 2^{c-2}$ or $e(G) = 2^c$. 
\item[(b)]
If $e(G) = 2^c- 2^{c-2}$, then $G$ is an almost complete intersection.
\end{itemize}
\end{que}
\noindent
This question  was motivated in part
 by experimental evidence and by an affirmative answer  under the assumption that $G$ is Cohen--Macaulay \cite[Theorem 1.9]{HS17}.
Observe that the proposed inequality  is sharper than the one of \cite[Theorem 1]{E09}, which in this case would yield  $ e(G) \leq 2^c- c+1$.

 In this paper, we give a complete affirmative answer  to Question \ref{Question}.
In fact, Theorem \ref{Theorem} provides  a bound on the multiplicity of non-complete intersection tangent cones whose degrees of the equations are bounded by an arbitrary  $d\geq 2$;
the case $d=2$ corresponds to Question \ref{Question}.
Furthermore, we prove  a stronger statement than item (b).
In Proposition \ref{PropositionExtremal} we  determine the minimal free resolutions of $R$ and $G$ in the case when the bound is achieved.
Finally, we show in Proposition \ref{Proposition} that the bound is sharp.

\section{Main result}

This section is devoted to the proof of the main theorem. 
We refer to \cite{E95} for definitions and background.

We will need the 
 following  discrete optimization results:

\begin{lemma}\label{Lemma}
Let $c, d\in \mathbb{N} $ be such that $c, d \geq 2$ and $E = \{\varepsilon_1, \ldots, \varepsilon_c\} \subseteq \mathbb{N}$  a multiset with $1 \leq \varepsilon_i \leq d$ for every $i$.

\begin{enumerate}
\item If $\sum_{i=1}^c\varepsilon_i = (c-1)d$ then $\prod_{i=1}^{c} \varepsilon_i \geq (d-1)d^{c-2}$,
with equality if and only if $E=\{1,d-1, d, \ldots, d\}$.

\item If $\sum_{i=1}^c\varepsilon_i = (c-1)d+1$ then $\prod_{i=1}^{c} \varepsilon_i\geq d^{c-1}$.

\end{enumerate}
\end{lemma}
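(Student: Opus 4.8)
The plan is to treat both parts as constrained optimization over multisets $E=\{\varepsilon_1,\dots,\varepsilon_c\}$ with $1\le\varepsilon_i\le d$, and to argue that at a minimizer of the product the entries are pushed to the extremes of the interval $[1,d]$. The key exchange lemma is: if $2\le a\le b\le d-1$ are two entries of a minimizing multiset, then replacing the pair $(a,b)$ by $(a-1,b+1)$ keeps the sum fixed and does not increase the product, since $(a-1)(b+1)=ab+(b-a)-1 < ab$ whenever $b-a < 1$... more precisely $(a-1)(b+1)=ab-(a-b+1)\le ab$ with equality only if $b=a-1$, which is impossible here, so the product strictly decreases unless $a=1$ or $b=d$ (the cases where the move is blocked). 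Hence at a minimum every entry is either equal to $1$ or equal to $d$, with possibly one ``middle'' entry $m$ with $1<m<d$ left over when the sum constraint forces it. First I would make this exchange argument precise, noting the only subtlety: one must also allow the move $(a,b)\mapsto(a+1,b-1)$ and check it too, or simply observe that repeated application of one direction terminates since $\sum\varepsilon_i^2$ strictly increases.

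For part (1), after the reduction the minimizing multiset has $k$ entries equal to $d$, $\ell$ entries equal to $1$, and at most one entry $m$ with $1<m<d$, subject to $k+\ell+[m\text{ present}]=c$ and $kd+\ell+m=(c-1)d$ (set $m=0$, and adjust, if absent). I would solve this small Diophantine system: writing $\ell = (\text{number of }1\text{'s})$, the sum condition gives $(c-\ell)d - (\text{deficit from the middle term}) \approx (c-1)d$, forcing $\ell$ small. A short case analysis on $\ell\in\{0,1,2,\dots\}$ shows the feasible extreme configurations are essentially $\{1,d-1,d,\dots,d\}$ (one $1$, one $d-1$, rest $d$'s) and $\{1,1,d,\dots,d\}$ with an appropriate count — but I would check that the latter, when it satisfies the sum constraint, forces $2d = d+1$ hence is infeasible for $d\ge 2$, or has a strictly larger product. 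Comparing the surviving candidate products against $(d-1)d^{c-2}$ yields the bound and the equality characterization. Part (2) is analogous but easier: with sum $(c-1)d+1$, the extreme configuration is $\{1,d,\dots,d\}$ ($c-1$ copies of $d$ and one $1$) giving product $d^{c-1}$, and one checks no other extreme multiset has smaller product; since $d^{c-1}$ is already the value at this natural candidate and no equality case is claimed, it suffices to show every extreme multiset has product $\ge d^{c-1}$, which again follows from the exchange lemma plus counting how many $1$'s are forced (at most one, since two $1$'s would drop the sum by $2(d-1)\ge d$ below $cd$, i.e.\ below $(c-1)d+1$ for $d\ge 2$ only when... — I would verify the arithmetic).

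An alternative, perhaps cleaner, route avoids explicit enumeration: induct on $c$. For $c=2$, part (1) reads $\varepsilon_1+\varepsilon_2=d$ with $\varepsilon_i\le d$, minimize $\varepsilon_1\varepsilon_2$; the minimum over $1\le\varepsilon_1\le d-1$ is at $\varepsilon_1=1$, giving $d-1=(d-1)d^0$, matching. For the inductive step, if some $\varepsilon_i=d$ remove it and apply the case-$(c-1)$ bound for sum $(c-2)d$; if no $\varepsilon_i=d$, then all $\varepsilon_i\le d-1$ but their sum is $(c-1)d > (c-1)(d-1)$ forces... a contradiction for $c$ large, handled directly. I would pick whichever of the two approaches keeps the equality analysis in part (1) shortest.

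The main obstacle I anticipate is the bookkeeping in the equality case of part (1): one must rule out all extreme multisets other than $\{1,d-1,d,\dots,d\}$ that meet the sum constraint and show they give a strictly larger product — in particular handling small $d$ (e.g.\ $d=2$, where ``$d-1$'' equals $1$ and the multiset degenerates to $\{1,1,2,\dots,2\}$) and small $c$ as genuine base cases rather than sweeping them under the general argument.
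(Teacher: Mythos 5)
Your proposal follows essentially the same route as the paper: the same exchange move $(a,b)\mapsto(a-1,b+1)$ that strictly decreases the product while preserving the sum, followed by an analysis of the terminal configurations in which every entry except at most one lies in $\{1,d\}$ (the paper organizes this via the indices $\rho=\min\{i:\varepsilon_i>1\}$, $\tau=\max\{i:\varepsilon_i<d\}$ and a double induction on $\tau-\rho$ and $\varepsilon_\tau-\varepsilon_\rho$). The only blemish is a sign slip in your expansion --- $(a-1)(b+1)=ab+a-b-1=ab-(b-a+1)$, not $ab+(b-a)-1$ --- but your stated conclusion (strict decrease whenever $a\le b$) is correct, and the degenerate case $d=2$ you flag, where $\{1,1,2,\dots,2\}=\{1,d-1,d,\dots,d\}$, is exactly the case the paper handles separately.
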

\begin{proof}
The two proofs are  similar, so we only present the first one.
Suppose without loss of generality that $ \varepsilon_1 \leq \cdots \leq \varepsilon_c$.
If $c=d=2$ then the result is obvious, so we assume $c+d \geq 5$.
Define
$$
\rho = \min \{ i \, : \, \varepsilon_i > 1\}, \qquad \tau = \max \{ i \, : \, \varepsilon_i < d\}.
$$
Since $ c <\sum_{i=1}^c\varepsilon_i <cd$,  both indices are well-defined integers in $\{1, \ldots, c\}$. 

If $\rho > \tau $ then $\rho = \tau +1 $ and  $\sum_{i=1}^c\varepsilon_i  = \tau + (c-\tau)d$;
however, since $\sum_{i=1}^c\varepsilon_i  =  (c-1)d$, we derive $(\tau-1) d = \tau$ and thus $d=\tau = 2$.
In this case,  $E = \{1,1,2,\ldots,2\}$ and the equality $\prod_{i=1}^{c} \varepsilon_i = (d-1)d^{c-2}$ holds.

If $\rho = \tau$ then we have $\sum_{i=1}^c\varepsilon_i = (\tau-1) + \varepsilon_\tau + (c-\tau)d$, which forces 
$ \varepsilon_\tau = (\tau-1) (d-1)$.
However, we have $2 \leq \varepsilon_\tau \leq d-1$, whence $\tau = 2$ and  $\varepsilon_2 = d-1$ .
Thus, in this case  $E = \{1,d-1,d,\ldots,d\}$ and the equality $\prod_{i=1}^{c} \varepsilon_i =  (d-1)d^{c-2}$ holds.

If $\rho < \tau$ then we define another multiset of integers $\varepsilon'_i$ by setting $\varepsilon'_\rho = \varepsilon_\rho-1,\varepsilon'_\tau = \varepsilon_\tau+1, \varepsilon'_i = \varepsilon_i$ for $i \ne \rho, \tau$.
Since $\varepsilon_\rho + \varepsilon_\tau =  \varepsilon'_\rho + \varepsilon'_\tau $ and  $0 \leq \varepsilon_\tau - \varepsilon_\rho <  \varepsilon'_\tau - \varepsilon'_\rho $ we have 
$\varepsilon_\rho \cdot \varepsilon_\tau > \varepsilon'_\rho \cdot \varepsilon'_\tau$ and hence
$\prod_{i=1}^{c} \varepsilon_i  > \prod_{i=1}^{c} \varepsilon'_i$. 
However, we still have $\sum_{i=1}^c\varepsilon'_i  =  (c-1)d$,
thus, by double induction on $\tau-\rho $ and $\varepsilon_\tau - \varepsilon_\rho$, we conclude that $ \prod_{i=1}^{c} \varepsilon'_i \geq (d-1)d^{c-2}$.
The desired statements follow.
 \end{proof}

We are ready to present the main result of this paper.

\begin{thm}\label{Theorem}
Let $\Lambda\subseteq \mathbb{N}$ be a numerical semigroup, $R= \Bbbk\llbracket \Lambda \rrbracket $, and  $G = \G$  the associated graded ring.
Assume that $G$ has codimension $c$ and is defined by equations of degree at most $d$, for some  $c,d\geq 2$.
If $G$ is not a complete intersection, then the multiplicity of $G$ satisfies
$$
e(G) \leq d^c - (d-1)d^{c-2}.
$$
Furthermore, if equality holds then $G$ is a Cohen--Macaulay almost complete intersection.
\end{thm}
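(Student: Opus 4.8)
The plan is to pass to the initial ideal of the defining ideal of $G$ with respect to a suitable monomial order and control the multiplicity combinatorially. Write $R = \Bbbk\llbracket\Lambda\rrbracket$ as a quotient of $P = \Bbbk\llbracket X_0,\ldots,X_c\rrbracket$ by the kernel $I$ of the map $X_i\mapsto t^{n_i}$; then $G = \G$ is a quotient of the polynomial ring $S = \Bbbk[X_0,\ldots,X_c]$ by a homogeneous ideal $J$, the ideal of initial forms of elements of $I$. Since $\codim G = c$, after possibly a generic linear change of coordinates we may assume $X_0$ is a nonzerodivisor on $G$ (equivalently on a minimal reduction), and $e(G) = e(G/X_0G) = \dim_\Bbbk S'/J'$ where $S' = \Bbbk[X_1,\ldots,X_c]$ and $J' = (J + X_0S)/X_0S$ is an Artinian homogeneous ideal, still generated in degrees $\le d$ (here one should be slightly careful: if $G$ is not Cohen--Macaulay, $X_0$ is a nonzerodivisor but the colon $J:X_0$ may be strictly larger than $J$, which only decreases $e$; this is fine for an upper bound but will matter for the equality case). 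The key point is then that $e(G) = \dim_\Bbbk S'/J'$ where $J'$ contains no linear forms (because $G$ has embedding dimension $c+1$, so $J$ and hence $J'$ start in degree $\ge 2$), $J'$ is generated in degrees $\le d$, and $J'$ is \emph{not} generated by a regular sequence of $c$ forms in $S'$ — the last because $G$ is not a complete intersection, and passing to the Artinian reduction preserves the complete-intersection property via the minimal number of generators / the structure of the Tor algebra.

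Next I would choose a monomial order on $S'$ and replace $J'$ by its initial ideal $\mathrm{in}(J')$, which has the same Hilbert function and hence the same colength $e(G)$, is again generated in degrees $\le d$, and is again not a complete intersection (a monomial ideal generated by a regular sequence must be generated by powers of distinct variables, and one checks the Hilbert series forces this form). So we are reduced to: a monomial ideal $\mathfrak{a}\subseteq \Bbbk[X_1,\ldots,X_c]$, Artinian, containing no variable, generated in degrees $\le d$, not a complete intersection; show $\dim_\Bbbk \Bbbk[X_1,\ldots,X_c]/\mathfrak{a} \le d^c - (d-1)d^{c-2}$. Now I would use the standard fact that such an $\mathfrak{a}$ is contained in an Artinian \emph{irreducible} monomial ideal $(X_1^{a_1+1},\ldots,X_c^{a_c+1})$ with each $a_i+1 \le d$ (take, for each variable $X_i$, the minimal power of $X_i$ lying in $\mathfrak{a}$, which exists and is $\le d$), so the colength is at most $\prod_{i=1}^c(a_i+1) =: \prod \varepsilon_i$ with $1\le \varepsilon_i\le d$. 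To extract the improvement over $d^c$, observe that because $\mathfrak{a}$ is not a complete intersection it cannot equal $(X_1^{\varepsilon_1},\ldots,X_c^{\varepsilon_c})$, so it contains at least one further monomial generator; a careful count of how much one such extra generator removes from the box, combined with the degree bound $\le d$ on that generator, should give $\dim_\Bbbk S'/\mathfrak{a}\le \prod\varepsilon_i - (\text{something})$, and then I would invoke Lemma \ref{Lemma} — this is precisely why the lemma is stated with the sum constraints $(c-1)d$ and $(c-1)d+1$, which correspond to the two extremal ``shapes'' of the pure-power part after accounting for one extra relation.

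The main obstacle is exactly this last bookkeeping step: turning ``not a complete intersection'' into a clean numerical inequality. The subtlety is that the extra generator could have low degree (as low as $2$) but then it is a monomial in few variables and removes a large sub-box, whereas a degree-$d$ extra generator removes less; one has to show the worst case over all choices of $\varepsilon_1,\ldots,\varepsilon_c$ and all admissible extra generators is governed by the two cases of Lemma \ref{Lemma}, i.e. that either the pure powers already satisfy $\sum\varepsilon_i = (c-1)d$ (giving $e \le d^c - (d-1)d^{c-2}$ directly) or they satisfy a tighter bound that still yields the same conclusion once the extra relation is subtracted. For the \emph{furthermore} clause: if $e(G) = d^c-(d-1)d^{c-2}$, all inequalities above are equalities, which forces $\mathrm{in}(J')$ to be exactly the almost-complete-intersection monomial ideal $(X_1,X_2$ raised to powers summing with the rest as in the equality case of Lemma \ref{Lemma}.(1)$)$ plus one extra generator, hence $\mu(\mathrm{in}(J')) = c+1$, so $\mu(J)\le c+1$ and $G$ is an almost complete intersection; moreover equality $e(G) = \dim S'/J'$ forces $X_0$ to be a nonzerodivisor with $J:X_0 = J$, i.e.\ $\mathrm{depth}\,G\ge 1$, and an almost complete intersection of positive depth in a one-dimensional-modulo-the-reduction setting is Cohen--Macaulay — alternatively one reads Cohen--Macaulayness off the fact that the Artinian reduction is a flat specialization, so $G$ has a system of parameters that is a regular sequence. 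I would flesh this equality analysis out using the minimal free resolution, which is in any case computed in Proposition \ref{PropositionExtremal}.
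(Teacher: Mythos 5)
Your overall blueprint --- exhibit inside a monomial (initial) ideal the pure powers $X_i^{\varepsilon_i}$ with $\varepsilon_i \le d$ plus at least one extra generator in at least two variables, then optimize with Lemma \ref{Lemma} --- is the skeleton of the paper's argument, and your combinatorial endgame is essentially right. But both steps of your reduction to that combinatorial situation have genuine gaps, and they occur exactly where the content of the theorem lies. First, the Artinian reduction: when $G$ is not Cohen--Macaulay it is a one-dimensional ring of depth $0$, so \emph{no} linear form is a nonzerodivisor, generic or not (your parenthetical ``$X_0$ is a nonzerodivisor but $J:X_0$ may be strictly larger than $J$'' is self-contradictory). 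The inequality $e(G)\le\dim_\Bbbk S'/J'$ survives, but ``not a complete intersection'' does not descend to $J'$ without the nonzerodivisor hypothesis, so you cannot exclude that $J'$ is a complete intersection of colength $d^c$ while $G$ is not; since the Cohen--Macaulay case was already settled in \cite[Theorem 1.9]{HS17}, this is precisely the case at stake. The paper never chooses a parameter on $G$: it builds an explicit subideal $H=(\mathcal{M},x_1^d,\dots,x_c^d)$ of $L=\LM_\prec(J)$ inside $P=\Bbbk[x_0,\dots,x_c]$ and uses $e(P/J)=e(P/L)\le e(P/H)$, which needs only that $H$ and $L$ have equal codimension; $x_0$ is then regular on $P/H$ in the relevant case, and the associativity formula handles the case $x_0\mid\mathcal{M}$.

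Second, the passage to the initial ideal: it is false in general that $\mathrm{in}(J')$ is ``again generated in degrees $\le d$'', and likewise false that the least pure power of $X_i$ in $\mathrm{in}(J')$ has exponent $\le d$ (already for $(X_1^2-X_2^2,\,X_1X_2)$ under lex the initial ideal contains $X_2^3$ but not $X_2^2$). Producing $x_i^d\in\LM_\prec(J)$ for $i=1,\dots,c$, together with one extra degree-$d$ monomial divisible by two distinct variables, is where all the semigroup-specific input enters: the toric ideal $I$ is binomial, so $J$ is generated by monomials and binomials and ``pure power in the support'' is an order-free notion; the binomials $f_i=X_i^d-\prod X_j^{\alpha_j}$ with $\sum\alpha_j\ge d$ exist because the minimal such degree $d_i$ is at most $d$; the reverse lexicographic order with $x_c\succ\cdots\succ x_0$ together with $n_0<\cdots<n_c$ forces $\LM_\prec(g_i)=x_i^d$; and the extra generator can be taken of degree exactly $d$ because $J/(g_1,\dots,g_c)$ is a nonzero submodule of the one-dimensional Cohen--Macaulay module $P/(g_1,\dots,g_c)$ and so has no finite-length submodule. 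Your proposal uses none of the monomial-curve hypotheses, and as Remark \ref{Remark} notes, the statement for arbitrary standard graded algebras is an open consequence of the Eisenbud--Green--Harris conjecture; an argument that never invokes the semigroup structure therefore cannot be completed. The equality analysis inherits both gaps, since it relies on the same reduction.
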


\begin{proof}
Let $n_0 < \cdots < n_c$ be the minimal set of generators of $\Lambda$, and define a regular presentation 
$\pi_R :  \Bbbk\llbracket X_0, \ldots, X_c \rrbracket \twoheadrightarrow R= \Bbbk\llbracket \Lambda\rrbracket$
by $\pi_R(X_i) = t^{n_i}$.
This induces a regular presentation $ \pi_G : P = \Bbbk[ x_0, \ldots, x_c] \twoheadrightarrow G$.
Let $I = \ker(\pi_R)$ and $J= \ker(\pi_G)$ denote the defining ideal of $R$ and $G$, respectively.
The ideal $I$ is generated by all binomials $\prod_{j=0}^c X_j^{\alpha_j}-\prod_{j=0}^c X_j^{\beta_j}$ with $\sum_{j=0}^c \alpha_j n_j = \sum_{j=0}^c \beta_j n_j$.
The ideal $J$ is generated by the initial forms of  elements of $I$,
so by binomials and monomials.
By assumption, the minimal generators of $J$ have degree at most $d$.

For each $i = 1, \ldots, c$, define
$$
d_i = \inf \Big\{ \delta \in \mathbb{N}^+ \, : \, 0 \ne X_i^{\delta} - \prod_{j=0}^c X_j^{\alpha_j} \in I \text{ for some } \alpha_j \in \mathbb{N} \text{ with } \sum_{j=0}^c \alpha_j \geq \delta \Big\}.
$$
First of all, we observe that $d_i < \infty$, since $X_i^{n_0} - X_0^{n_i} \in I$.
By definition, $d_i$ is the lowest degree of a form $g \in J$ containing a pure power of $X_i$ in its support.
It follows in particular that $d_i \leq d$, otherwise $J$ would have a minimal generator of degree $d_i > d$, 
giving a contradiction.
Up to multiplying by $X_i^{d-d_i}$, we conclude that there exists a nonzero binomial $f_i = X_i^d -  \prod_{j=0}^c X_j^{\alpha_j} \in I$ such that $ \sum_{j=0}^c \alpha_j \geq d$.
Let $g_i \in J$ be the initial form of $f_i$.
Note that either  $g_i = x_i^d -  \prod_{j=0}^c x_j^{\alpha_j} $ or $g_i = x_i^d$, depending on whether $ \sum_{j=0}^c \alpha_j = d$ or $ \sum_{j=0}^c \alpha_j > d$.

Let $\prec$ denote the reverse lexicographic monomial order on $P$ with the variables ordered by  $x_c \succ x_{c-1} \succ \cdots \succ x_0$,
and denote leading monomials by $\LM_\prec(-)$.
Then we have  $\LM_\prec(g_i) = x_i^d$ for every $i=1, \ldots, c$:
this is obvious for those $i$ such that $f_i$ is not homogeneous.
If $f_i = X_i^d -  \prod_{j=0}^c X_j^{\alpha_j} \in I$ with  $ \sum_{j=0}^c \alpha_j = d$,
then necessarily $\alpha_j >0 $ for some $j < i$, as the generators of $\Lambda$ are in increasing order; 
we conclude that $x_i^d \succ \prod_{j=0}^c x_j^{\alpha_j} $.

Since the sub-ideal $(x_1^d, \ldots, x_c^d) \subseteq \LM_\prec(J)$ is generated by a regular sequence, 
 by upper semicontinuity  the sub-ideal $J'=(g_1, \ldots, g_c)\subseteq J$ is also generated by a regular sequence.
By assumption $J$ is generated in degrees at most $d$ and it cannot be generated by a regular sequence, 
therefore $J' $ and $J$ must differ in some degree $d'\leq d$.
However, this implies that $J'$ and $J$ differ in degree $d$.
In fact, the quotient $P/J'$ is Cohen-Macaulay of dimension 1, so the local cohomology $H^0_{\mm_P}(P/J')$  vanishes, 
and the non-zero submodule $J/J'\subseteq P/J'$ contains no non-trivial submodule of finite length. 
We conclude that  there exists a homogeneous $g_0 \in J \setminus (g_1, \ldots, g_c)$, and it may be chosen to be a monic monomial or binomial of degree $d$.

Suppose that $\LM_\prec(g_0) = x_i^d$ for some $i = 1, \ldots, c$. 
Then $g_0 - g_i\ne 0$ has degree $d$ and it does not contain any pure power in its support,  as no homogeneous binomial in $I$ contains two distinct pure powers in its support.
Thus, up to replacing $g_0$ with $g_0-g_i$, we may assume that $\LM_\prec(g_0)=\mathcal{M}$ is a monomial of degree $d$ divisible by at least two distinct variables of $P$.

We obtain the inclusion of monomial ideals 
$$
H = \big(\LM_\prec(g_0), \LM_\prec(g_1), \ldots, \LM_\prec(g_c)\big) \subseteq L =  \big(\LM_\prec(g) \, : \, g \in J \big).
$$
Now we distinguish two cases.

\underline{Case 1}:
$x_0$ does not divide $\mathcal{M}$.
Then $H$ is a $(x_1, \ldots, x_c)$-primary ideal of $P$ of dimension 1. The variable $x_0$ is a non-zerodivisor on $P/H$,  hence the multiplicity can be computed as 
\begin{eqnarray*}
e\left(\frac{P}{H}\right) &=&e\left(\frac{P}{H+(x_0)}\right) = \dim_\Bbbk\left(\frac{P}{H+(x_0)}\right) = \dim_\Bbbk\left(\frac{\Bbbk[x_1, \ldots,x_c]}{(\mathcal{M},x_1^d, \ldots, x_c^d)}\right)  \\
& = &  \dim_\Bbbk\left(\frac{\Bbbk[x_1, \ldots,x_c]}{(x_1^d, \ldots, x_c^d)}\right)  -  \dim_\Bbbk\left(\frac{(\mathcal{M},x_1^d, \ldots, x_c^d)}{(x_1^d, \ldots, x_c^d)}\right).  
\end{eqnarray*}
The first quantity in this difference is equal to $d^c$, whereas,
writing $\mathcal{M}= \prod_{j=1}^c x_j^{\mu_j}$ where $\sum_{j=1}^c \mu_j = d $ and $\mu_j<d$ for all $j$,
the second quantity is 
$$
\dim_\Bbbk\left(\frac{(\mathcal{M},x_1^d, \ldots, x_c^d)}{(x_1^d, \ldots, x_c^d)}\right) = \Card \big\{ (\gamma_1, \ldots, \gamma_c) \, : \, \mu_j \leq \gamma_j < d\big\}
= \prod_{j=1}^c(d-\mu_j).
$$
By Lemma \ref{Lemma}, the least possible value of the product $\prod_{j=1}^c(d-\mu_j)$ with $1 \leq d -\mu_j \leq d$ and subject to the constraint $\sum_{j=1}^c(d-\mu_j)= cd - \deg(\mathcal{M}) = (c-1)d$ is $(d-1)d^{c-2}$.
In conclusion, we have
$e(P/H) \leq d^c - (d-1)d^{c-2}$.

\underline{Case 2}:
$x_0$ divides $\mathcal{M}$.
Write $\mathcal{M}= x_0^\alpha \mathcal{N}$ where $\mathcal{N}$ is a monomial in $x_1, \ldots, x_c$ of degree $d-\alpha$, with $0<\alpha<d$.
A shortest primary decomposition of $H$ is then $H = H_1 \cap H_2$,
where $H_1 = (x_0^\alpha, x_1^d, \ldots, x_c^d)$ and $H_2 = (\mathcal{N}, x_1^d, \ldots, x_c^d)$.
Note that $H_1$ has dimension 0 while $H_2$ has dimension 1.
It follows, for instance from the associativity formula of multiplicity \cite[Ex. 12.11.e]{E95}, that $e(P/H) = e(P/H_2)$.
The variable $x_0$ is a non-zerodivisor on $P/H_2$, and as above we compute
\begin{eqnarray*}
e\left(\frac{P}{H}\right) &=& e\left(\frac{P}{H_2}\right) = e\left(\frac{P}{H_2+(x_0)}\right) = \dim_\Bbbk\left(\frac{\Bbbk[x_1, \ldots,x_c]}{(\mathcal{N},x_1^d, \ldots, x_c^d)}\right)  \\
& = &  \dim_\Bbbk\left(\frac{\Bbbk[x_1, \ldots,x_c]}{(x_1^d, \ldots, x_c^d)}\right)  -  \dim_\Bbbk\left(\frac{(\mathcal{N},x_1^d, \ldots, x_c^d)}{(x_1^d, \ldots, x_c^d)}\right) 
\end{eqnarray*}
Proceeding as in Case 1, we estimate the second quantity in this difference to be at least $d^{c-1}$, since now we have $\deg(\mathcal{N}) \leq d-1$.
Therefore, in this case we have $e(P/H) \leq d^c - d^{c-1}$.

In either case we see that $e(P/H) \leq d^c - (d-1)d^{c-2}$.
Since both ideals $H$ and $L$ have codimension $c$,
the inclusion $H \subseteq L$ implies that $e(P/L) \leq e(P/H)$.
Finally, the fact that a homogeneous ideal and its initial ideal have the same multiplicity yields
 $e(G) = e(P/J) = e(P/L) \leq e(P/H) \leq d^c - (d-1)d^{c-2}$ as desired.

Now suppose that the equality $e(G)= d^c - (d-1)d^{c-2}$ holds, then necessarily
 $ e(P/L) = e(P/H) = d^c - (d-1)d^{c-2}$.
In particular, Case 2 cannot occur, hence
$P/H$ is  Cohen--Macaulay since $x_0$ is a non-zerodivisor by Case 1.
We have an inclusion of ideals  $H\subseteq L$ of the same codimension $c$ and with the smaller one being $(x_1, \ldots, x_c)$--primary;
the associativity formula of multiplicity forces $H=L$.
We deduce that the initial ideal of $J$ is a Cohen--Macaulay almost complete intersection;
by upper semicontinuity, the same must be true for $J$ itself.
This concludes the proof.

We also observe that the equality $e(P/H) = d^c - (d-1)d^{c-2}$ forces 
the product $\prod_{j=1}^c(d-\mu_j)$ to achieve the least possible value $(d-1)d^{c-2}$.
By Lemma \ref{Lemma} (1), up to renaming the variables $x_1, \ldots, x_c$, we necessarily have $\mathcal{M}=x_1^{d-1}x_2$.
Thus, 
if  the equality $e(G)= d^c - (d-1)d^{c-2}$ holds,
then we have $L = (x_1^d, x_2^d, \ldots, x_c^d, x_1^{d-1}x_2)$, up to renaming the variables.
\end{proof}

Theorem \ref{Theorem} can be applied readily in the negative direction.

\begin{example}
Let $\Lambda = \langle 100, n_1, n_2, n_3 , n_4\rangle$ be minimally generated by  $100< n_1 < n_2 < n_3 < n_4$.
From Theorem \ref{Theorem}  we  deduce that $\G$ must have a minimal  relation of degree at least $ 4$.
\end{example}

We remark that the converse of the last statement in Theorem \ref{Theorem} is false. 
That is,
if $\G$ is a Cohen--Macaulay almost complete intersection, it may happen that $e(\G) < d^c - (d-1)d^{c-2}$.
This is the case for instance for the quadratic semigroup $\Lambda= \langle 11, 13, 14, 15, 19 \rangle$, see also \cite[Remark 1.10]{HS17}.

\section{The extremal case}

In this section we investigate further the case when the upper bound in Theorem \ref{Theorem} is attained, showing that this condition forces very strong properties.

First, we determine the minimal free resolutions of  the semigroup ring $R$ and  tangent cone $G$.

\begin{prop}\label{PropositionExtremal}
Let $\Lambda\subseteq \mathbb{N}$ be a numerical semigroup, $R= \Bbbk\llbracket \Lambda \rrbracket $, and  $G = \G$.
Assume that $G$ has codimension $c\geq 2$, is defined by equations of degree at most $d\geq 2$, and is not a complete intersection.
If  $ e(G) = d^c - (d-1)d^{c-2}$ then the Betti numbers of $R$ and $G$ are
$$
\beta_i(R) = \beta_i(G) = \binom{c-2}{i}+3\binom{c-2}{i-1}+2\binom{c-2}{i-2}\qquad \mbox{ for }\,  i = 0, \ldots, c.
$$
\end{prop}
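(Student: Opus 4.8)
The plan is to reduce the whole statement to a single graded computation, namely the minimal free resolution of the monomial quotient $P/L$, where $L = (x_1^d, \ldots, x_c^d, x_1^{d-1}x_2)$ is the initial ideal of $J$ singled out at the very end of the proof of Theorem~\ref{Theorem}, and then to transfer the answer to $G$ and to $R$ through two flat degenerations. First I would record the chain of flat families $R \rightsquigarrow G = \G \rightsquigarrow P/L$: the passage to the tangent cone is the deformation to the normal cone, a flat family over $\mathbb{A}^1$ with general fiber $R$ and special fiber $G$, and the passage to $L$ is the Gröbner degeneration along $\prec$. By upper semicontinuity of Betti numbers under flat degeneration one obtains $\beta_i(R) \leq \beta_i(G) \leq \beta_i(P/L)$ for every $i$, so it suffices to compute $\beta_i(P/L)$ and then to rule out any drop along the two specializations.

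Next I would compute the graded resolution of $P/L$ explicitly. Since the variable $x_0$ does not occur in $L$, the Betti numbers of $P/L$ over $P$ coincide with those of $\Bbbk[x_1,\ldots,x_c]/L'$, where $L' = (x_1^d,\ldots,x_c^d,x_1^{d-1}x_2)$, and this ring is the tensor product over $\Bbbk$ of
$$
A = \Bbbk[x_1,x_2]/(x_1^d, x_2^d, x_1^{d-1}x_2) \quad\text{and}\quad B = \Bbbk[x_3,\ldots,x_c]/(x_3^d,\ldots,x_c^d).
$$
Their minimal resolutions tensor together, so the graded Poincaré series multiply. The ring $B$ is a complete intersection resolved by a Koszul complex, contributing $\beta_{k,kd}(B) = \binom{c-2}{k}$; the ring $A$ is a codimension-two Cohen--Macaulay quotient minimally generated by the three forms of degree $d$, so by the Hilbert--Burch theorem its resolution has length two, the two minimal first syzygies being $x_1\cdot(x_1^{d-1}x_2) = x_2\cdot x_1^d$ and $x_2^{d-1}\cdot(x_1^{d-1}x_2) = x_1^{d-1}\cdot x_2^d$, of internal degrees $d+1$ and $2d-1$. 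Thus $A$ contributes $1 + 3t + 2t^2$, and multiplying by $(1+t)^{c-2}$ yields total Betti numbers $\binom{c-2}{i}+3\binom{c-2}{i-1}+2\binom{c-2}{i-2}$, which is exactly the claimed formula.

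Finally I would upgrade the two inequalities to equalities using consecutive cancellations. By Peeva's theorem the graded Betti table of $G = P/J$ is obtained from that of $P/L$ by simultaneous decrements of pairs of entries $\beta_{i,j}$ and $\beta_{i+1,j}$ lying in a common internal degree $j$, and the analogous principle governs the passage from $G$ to $R$ through the deformation to the normal cone. The key observation is that no such cancellation is available for $P/L$: the degree bookkeeping of the previous paragraph shows that every internal degree occurring in the $i$-th free module of the resolution of $P/L$ (namely $id$, $id-1$, or $(i-1)d+1$) lies in the interval $[(i-1)d+1,\,id]$, and since $d\geq 2$ these intervals are pairwise disjoint for distinct $i$. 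Hence no internal degree is shared between homological positions $i$ and $i+1$, consecutive cancellation is impossible in both degenerations, and $\beta_{i,j}(R)=\beta_{i,j}(G)=\beta_{i,j}(P/L)$ for all $i,j$; summing over $j$ gives the asserted equalities. I expect the main obstacle to be precisely this last transfer: Peeva's theorem settles the Gröbner step cleanly, but the consecutive-cancellation principle relating $R$ to its tangent cone $G$ must be invoked with care, resting on the flatness of the deformation to the normal cone, after which the degree-separation established above makes both steps Betti-number preserving.
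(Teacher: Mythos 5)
Your proposal is correct and follows essentially the same route as the paper: identify $L=\LM_\prec(J)=(x_1^d,\ldots,x_c^d,x_1^{d-1}x_2)$ from the extremal case of Theorem~\ref{Theorem}, resolve $P/L$ as a tensor product of a Hilbert--Burch resolution (syzygy degrees $d+1$ and $2d-1$) with a Koszul complex, and then observe that the internal degrees at homological position $i$ all lie in $[(i-1)d+1,\,id]$, so that neither same-degree nor negative consecutive cancellations can occur; the paper invokes Peeva for the step $J\rightsquigarrow L$ and Rossi--Sharifan/Sammartano for the step $R\rightsquigarrow G$, exactly as you describe. The only cosmetic difference is that you split off $x_0$ before forming the tensor decomposition, while the paper keeps $x_0$ inside the Hilbert--Burch factor.
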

\begin{proof}
We have shown at the end of the proof of Theorem \ref{Theorem} that, under these assumptions and up to renaming the variables, 
we have $L=\LM_\prec(J) = (x_1^d, x_2^d, \ldots, x_c^d, x_1^{d-1}x_2)$. 
We determine the minimal graded free resolution of $P/L$.
Observe that $P/L \cong P'/L' \otimes_\Bbbk P''/L''$ where $L' = ( x_1^{d}, x_1^{d-1}x_2, x_2^d) \subseteq P' = \Bbbk[x_0,x_1, x_2]$ and 
$L'' = ( x_3^{d}, x_4^{d}, \ldots, x_c^d) \subseteq P'' = \Bbbk[x_3,\ldots, x_c]$.
The ideal $L'$ is perfect of codimension 2, hence the resolution of $ P'/L' $ over $P'$ is determined by the Hilbert-Burch matrix
$$
\begin{pmatrix}
x_2 & 0 \\
-x_1 & x_2^{d-1} \\
0 & -x_1^{d-1}
\end{pmatrix}
$$
and therefore it has the form
$$
0 \rightarrow P'(-2d+1)\oplus P'(-d-1) \rightarrow P'(-d)^3 \rightarrow P'.
$$
The ideal $L''$ is generated by a regular sequence and  the resolution of $ P''/L'' $ over $P''$ is given by the Koszul complex
$$
0 \rightarrow P''(-(c-2)d) \rightarrow \cdots  \rightarrow P''(-2d)^{\binom{c-2}{2}} \rightarrow P''(-d)^{c-2} \rightarrow P''.
$$
Finally, the minimal free resolution of $P/L$ over $P$ is obtained by tensoring the two resolutions, 
hence the graded Betti numbers are given by
$$
\beta_{i,j}(P/L) = \sum_{\substack{i'+i'' = i \\ j' + j'' = j}}\beta_{i',j'}(P'/L')\cdot\beta_{i'',j''}(P''/L'').
$$
We obtain the following formulas for  the nonzero graded Betti numbers of $P/L$: 
if $d \geq 3$ then
\begin{eqnarray*}
\beta_{i,id}(P/L) &=& \binom{c-2}{i}+3\binom{c-2}{i-1}\\
\beta_{i,id-1}(P/L) &=& \binom{c-2}{i-2}\\
 \beta_{i,(i-1)d+1}(P/L) &=& \binom{c-2}{i-2}
\end{eqnarray*}
whereas if $d = 2$  one simply adds the last two lines. 

The formulas above imply that $\beta_{i,j}(P/L)\cdot \beta_{i+1,k}(P/L) = 0 $ for all $k \leq j$.
In other words, we cannot have any consecutive cancellation of the same degree \cite{Pe04} or of negative degree \cite{RS10,Sa16}.
It follows from \cite[Proof of Theorem 1.1]{Pe04} that 
$\beta_{i,j}(P/L) = \beta_{i,j}(P/J)$ for all $i,j$.
As for the  Betti numbers of the local ring $R$, 
it follows from \cite[Theorem 3.1]{RS10} or \cite[Theorem 2]{Sa16} that 
$\beta_{i}(G) = \beta_{i}(R)$ for all $i$.
The proof is concluded.
\end{proof}

\begin{remark}
We have proved that, if the upper bound is attained, then $G$, and therefore $R$, are almost complete intersections.
Moreover, their defining ideals $J$ and $I$ are \emph{licci},
i.e. they belong to the linkage class of a complete intersection.
In fact,  as already observed,  $L = \LM_\prec(J) = (x_1^d, \ldots, x_c^d, x_1^{d-1}x_2)$ and we have
 $ (x_1^d, \ldots, x_c^d) : L =  (x_1^d, \ldots, x_c^d) : ( x_1^{d-1}x_2) = (x_1, x_2^{d-1}, x_3^d, \ldots, x_c^d)$,
that is, $L$ is linked in one step to a complete intersection;
this implies that the same is true for  $J$ and $I$.
Furthermore, $J$ is \emph{strongly licci} in the sense of \cite{HU07}.
\end{remark}

Next,
we construct a family of monomial curves to show that the upper bound for the multiplicity is sharp.

\begin{prop}\label{Proposition}
For every $c,d\geq 2$  there exists a numerical semigroup attaining the upper bound in Theorem \ref{Theorem}.
\end{prop}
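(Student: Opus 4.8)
The plan is to write down an explicit family of numerical semigroups, one for each pair $(c,d)$, and to check that each one realizes the extremal configuration found at the end of the proof of Theorem~\ref{Theorem}. Recall that $e(\G)=e(R)=n_0$, the least generator of $\Lambda$; so the idea is to force $n_0=d^c-(d-1)d^{c-2}$ and then to produce $c+1$ binomials in $I$ (the defining ideal of $R$, with $J$ that of $\G$, in the notation of Theorem~\ref{Theorem}) whose initial forms have leading monomials generating $L=(x_1^d,\dots,x_c^d,x_{c-1}^{d-1}x_c)$, the ideal identified in that proof after renaming $x_{c-1},x_c$ to $x_1,x_2$. Once $L\subseteq\LM_\prec(J)$ is established, a multiplicity comparison upgrades this to $\LM_\prec(J)=L$, from which every hypothesis and the equality in Theorem~\ref{Theorem} can be read off.

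Concretely, I would take
$$n_0=d^c-(d-1)d^{c-2},\qquad n_i=n_0+d^{i-1}(d-1)^{c-i}\quad(i=1,\dots,c),$$
and set $\Lambda=\langle n_0,\dots,n_c\rangle$, $P=\Bbbk[x_0,\dots,x_c]$. The increments $c_i:=n_i-n_0=d^{i-1}(d-1)^{c-i}$ are strictly increasing, so $n_0<n_1<\cdots<n_c$; moreover $n_1-n_0=(d-1)^{c-1}$ is coprime to $n_0=d^{c-2}(d^2-d+1)$ because $d-1$ is coprime to both $d$ and $d^2-d+1$, whence $\gcd(n_0,\dots,n_c)=1$ and $\Lambda$ is a numerical semigroup. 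I would then exhibit the binomials
$$X_i^d-X_0X_{i+1}^{d-1}\ (1\le i\le c-1),\qquad X_c^d-X_0^dX_{c-1},\qquad X_{c-1}^{d-1}X_c-X_0^{d+1},$$
and verify they lie in $I$ by a weight computation: the first family uses only the recursion $d\,c_i=(d-1)c_{i+1}$ and holds for every $n_0$, while the last two hold precisely because $n_0=d^c-(d-1)d^{c-2}$ (equivalently $d\,c_c=n_0+c_{c-1}$ and $(d-1)c_{c-1}+c_c=n_0$). In the reverse lexicographic order of the theorem the first $c-1$ binomials are homogeneous with $\LM_\prec=x_i^d$, and the last two are non-homogeneous with initial forms $x_c^d$ and $x_{c-1}^{d-1}x_c$; hence $L\subseteq\LM_\prec(J)$.

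To finish, I would compare multiplicities. On one side $e(\G)=n_0=d^c-(d-1)d^{c-2}$; on the other, the Case~1 computation in the proof of Theorem~\ref{Theorem} gives $e(P/L)=d^c-(d-1)d^{c-2}$ as well. Since $L\subseteq\LM_\prec(J)$ are ideals of codimension $c$ and $P/L$ is Cohen--Macaulay (as in Proposition~\ref{PropositionExtremal}), hence unmixed with unique associated prime $(x_1,\dots,x_c)$, the equality $e(P/\LM_\prec(J))=e(P/L)$ forces $\LM_\prec(J)=L$: a nonzero submodule $\LM_\prec(J)/L\subseteq P/L$ would have $(x_1,\dots,x_c)$ as an associated prime and strictly lower the multiplicity. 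From $\LM_\prec(J)=L$ I would read off that $\G$ has codimension $c$ and embedding dimension $c+1$ (so $\Lambda$ is minimally generated, as $L$ has no linear generator), that $\G$ is defined in degrees $\le d$ (as $J$ is homogeneous and $L$ is generated in degree $d$), and that $\G$ is not a complete intersection: the only products of $c$ integers in $\{2,\dots,d\}$ that are $\ge d^{c-1}(d-1)$ are $d^{c-1}(d-1)$ and $d^c$, while $d^{c-1}(d-1)<d^c-(d-1)d^{c-2}<d^c$. Thus $\G$ satisfies the hypotheses of Theorem~\ref{Theorem} with equality, and the bound is attained.

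The step I expect to demand the most care is the construction itself, and within it the presence of the extra relation $X_{c-1}^{d-1}X_c-X_0^{d+1}$: the complete-intersection-type relations $X_i^d$ and the value of $n_0$ are comparatively easy to arrange together, but it is this last binomial --- the one responsible for the monomial $x_{c-1}^{d-1}x_c$, and hence for $\G$ being an almost, rather than honest, complete intersection --- that must be made to appear simultaneously, and for a generic choice of generators it simply will not. After the family above is fixed, all verifications collapse to the displayed weight identities and the single multiplicity comparison, leaving no delicate estimates.
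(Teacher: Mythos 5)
Your proposal is correct and follows essentially the same strategy as the paper's proof: fix $n_0=d^c-(d-1)d^{c-2}$, exhibit $c+1$ explicit binomials in $I$ whose initial forms have leading monomials generating the extremal ideal $(x_1^d,\dots,x_c^d,\mathcal{M})$, and then use the multiplicity comparison $e(P/\LM_\prec(J))=n_0=e(P/L)$ together with primaryness of $L$ to conclude $\LM_\prec(J)=L$. Your numerical family (increments $d^{i-1}(d-1)^{c-i}$) differs from the paper's (increments $1,d,(d^2-d+1)d^{i-3}$), but I checked the weight identities $d\,c_i=(d-1)c_{i+1}$, $d\,c_c=n_0+c_{c-1}$, $(d-1)c_{c-1}+c_c=n_0$ and they all hold, so both families work.
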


\begin{proof}
Let $ e = d^c - (d-1)d^{c-2}$ and set
$$
n_0 = e,\quad n_1 = e+1, \quad n_2 = e + d,\quad n_i = e + (d^2-d+1)d^{i-3} \quad \mbox{for }3 \leq i \leq c.
$$
Consider the numerical semigroup $\Lambda = \langle n_0, \ldots, n_c\rangle$, and notice that the generating set is minimal because $n_0 < \cdots < n_c < 2n_0$.
Clearly, we have $e(G) = d^c - (d-1)d^{c-2}$ and $\codim(G)=c$; it remains to show that $G$ is defined by relations of degrees at most equal to $d$.

We use the same notation as in the  proof of Theorem 	\ref{Theorem}.
If $c\geq 3$ then the defining ideal $I$ of $R$ contains the relations
$f_0 = X_1X_2^{d-1} - X_0^{d-1} X_3$, 
$f_2 = X_2^d - X_1^{d-1}X_3$,
$f_c = X_c^d - X_0^{d+1}$,
and 
$f_i =X_i^d - X_{i-1}^{d-1}X_{i+1}$ for $i=1 ,3, 4, \ldots, c-1$.
If $c=2$ then $I$ contains  $f_0 = X_1X_2^{d-1} - X_0^{d+1}, f_1 = X_1^{d} - X_0^{d-1}X_2, f_2 = X_2^{d} - X_0^2X_1^{d-1}$.
Let $g_i$ be the initial form of $f_i$ and let $H = \big( \LM_\prec(g_0), \ldots, \LM_\prec(g_c)\big) = \big( x_1x_2^{d-1}, x_1^d, \ldots, x_c^d\big)$.
As in the proof of  Theorem 	\ref{Theorem} we see that
 $H$ is a primary ideal of codimension $c$ and multiplicity $d^c - (d-1)d^{c-2}$, 
and then $H$ must coincide with the initial ideal of the defining ideal $J$ of $G$.
In particular, $J= (g_0, \ldots, g_c)$.
\end{proof}

A standard graded $\Bbbk$--algebra $G$ is called Koszul if $\Bbbk$ has a linear $G$--resolution, equivalently if $\Tor_i^G(\Bbbk, \Bbbk)_j = 0$ for all $i\ne j$,
cf. \cite{CDR13}.
If $G$ is a Koszul algebra then it is  defined by quadrics, however, this is only a necessary condition.
It is interesting to find sufficient conditions for quadratic $\Bbbk$--algebras to be Koszul.
The next corollary shows that attaining the upper bound in Theorem \ref{Theorem} is a sufficient condition.

\begin{cor}
Let $\Lambda $ be a quadratic numerical semigroup  minimally generated by $n_0 < \ldots < n_c $. 
If $n_0 = 2^c - 2^{c-2} $, then $\G$ is a Koszul algebra.
\end{cor}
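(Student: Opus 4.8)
The plan is to invoke the structural information we already extracted in the extremal case and read off Koszulness from the shape of the resolution. The hypothesis $n_0 = 2^c - 2^{c-2}$ says precisely that $e(G) = d^c - (d-1)d^{c-2}$ with $d = 2$, since $\Lambda$ quadratic means $G$ is defined by quadrics (so $d=2$) and $e(G) = n_0$. Because $\Lambda$ is quadratic, $G$ is certainly not a complete intersection once $c \geq 2$ (a quadratic complete intersection would have $e(G) = 2^c \neq 2^c - 2^{c-2}$), so Theorem \ref{Theorem} and its extremal analysis apply.

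The first step is to observe that, by the final computation in the proof of Theorem \ref{Theorem}, up to renaming variables the initial ideal is $L = \LM_\prec(J) = (x_1^2, x_2^2, \ldots, x_c^2, x_1 x_2)$, specializing the formula $(x_1^d,\ldots,x_c^d,x_1^{d-1}x_2)$ to $d=2$. This is a quadratic monomial ideal, in fact a squarefree-plus-squares monomial ideal whose generators all have degree $2$. The key structural fact, from Proposition \ref{PropositionExtremal}, is that $L$ has a linear resolution in the appropriate sense: when $d=2$ the three families of graded Betti numbers collapse, and one checks that $\beta_{i,j}(P/L) \neq 0$ only when $j = 2i$ or $j = 2i - 1$, i.e. $L$ itself is generated in degree $2$ and has a \emph{linear} minimal free resolution over $P$. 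A quadratic monomial ideal with linear resolution has a $G$-algebra quotient that is Koszul.

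The plan is to conclude via the standard fact that $G = P/J$ is Koszul whenever its initial ideal $L = \LM_\prec(J)$ defines a Koszul algebra $P/L$, and that $P/L$ is Koszul because $L$ is a quadratic monomial ideal with a $2$-linear resolution. I would phrase this using the fact that $G$ has a quadratic Gröbner basis: the proof of Proposition \ref{Proposition} (or of the extremal case of Theorem \ref{Theorem}) shows that $J = (g_0, \ldots, g_c)$ is generated by the quadrics whose leading terms generate $L$, so these form a Gröbner basis consisting of quadrics. An algebra defined by a quadratic Gröbner basis is Koszul, which is the desired conclusion.

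The main obstacle is purely expository rather than mathematical: I must make precise the passage from the extremal structure of $L$ to Koszulness of $G$, choosing cleanly between the two available routes. The cleaner route is the Gröbner deformation: since we have exhibited an explicit generating set of $J$ whose initial monomials generate $L$ and all of these are quadratic, $J$ has a quadratic Gröbner basis, and $G$ is then Koszul by the standard \emph{$G$-quadratic implies Koszul} theorem (see \cite{CDR13}). I would need only to confirm that the generators produced in the extremal analysis do constitute a full Gröbner basis, which follows because their leading terms already generate $L = \LM_\prec(J)$; this is immediate and requires no new computation.
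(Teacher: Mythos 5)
Your proposal is correct and follows the paper's own route: the extremal analysis at the end of the proof of Theorem \ref{Theorem} with $d=2$ gives $\LM_\prec(J)=(x_1^2,\dots,x_c^2,x_1x_2)$, a quadratic monomial initial ideal, so $G$ is $G$-quadratic and hence Koszul by the standard theorem cited in \cite{CDR13}. One caveat on a non-load-bearing digression: your claim that $P/L$ has a $2$-linear resolution is false (the nonvanishing of $\beta_{i,2i}(P/L)$ shows the resolution is far from linear), and no such hypothesis is needed anyway, since every quadratic monomial ideal defines a Koszul algebra (Fr\"oberg); your final argument via the quadratic Gr\"obner basis does not rely on this remark.
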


\begin{proof}
From the proof of the last statement of Theorem \ref{Theorem} with $d=2$, 
we see that the defining ideal $J$ of $\G$ has an initial ideal $L$ generated by quadratic monomials;
this implies the Koszul property, cf. \cite{CDR13}.
\end{proof}

We conclude the paper with a general discussion.

\begin{remark}\label{Remark}
It is natural to ask for what classes of  rings Theorem \ref{Theorem} is valid.
No example of  a standard graded $\Bbbk$--algebra $G$ is known which violates the upper bound.
In fact, it is possible to show that if the  Eisenbud--Green--Harris conjecture \cite{EGH93} holds, then the inequality is true for any standard graded $\Bbbk$--algebra $G$.
We refer to \cite{FR07} for a detailed account of this problem.
Roughly speaking,
the most general formulation predicts that every Hilbert function in a complete intersection defined by forms of prescribed degrees
is realized by a lexsegment ideal in a complete intersection defined by pure powers  of the given degrees.
The conjecture has been solved only in some special cases, e.g. \cite{A15,CCV14,CM08,C16,O02}.
\end{remark}

\subsection*{Acknowledgments}
Part of this work was supported by the National Science Foundation under Grant No. 1440140, while the author was a Postdoctoral Fellow at the Mathematical Sciences Research Institute in Berkeley, CA.
The author would like to thank Giulio Caviglia, Dumitru Stamate, Francesco Strazzanti,  and especially an anonymous referee for some helpful comments.

\end{document}